\documentclass[11pt]{amsart}
\textwidth5.7in \textheight7.8in \voffset=-0.5in
\hoffset=-0.5in
\usepackage{amssymb}
\usepackage{latexsym}
\usepackage{amsmath}
\usepackage{amsthm}
\usepackage{graphicx}

\newtheorem{theorem}{Theorem}[section]

\newtheorem{prop}[theorem]{Proposition}

\newtheorem{cor}[theorem]{Corollary}
\newtheorem{lemma}[theorem]{Lemma}

\newtheorem{lemma-definition}[theorem]{Lemma-Definition}

\numberwithin{figure}{section}
\numberwithin{equation}{section}

\newtheorem{defn}[theorem]{Definition}

\newtheorem{rmk}[theorem]{Remark}

\newtheorem{example}[theorem]{Example}

\newcommand{\er}{{\Diamond}}
\newcommand{\less} {{\smallsetminus }}

\newcommand{\MS}{{\medskip}}
\newcommand{\NI}{{\noindent}}
\newcommand{\eps}{{\varepsilon}} 
\newcommand{\ind}{{\rm ind}}
\newcommand{\gr}{{\rm gr}}
\newcommand{\ov}{\overline}
\newcommand{\FS}{{\rm FS}}
\newcommand{\Si}{{\Sigma}}

\newcommand{\Ee}{{\mathcal E}}
\newcommand{\p}{{\partial}}
\newcommand{\C}{{\mathbb C}}

\newcommand{\R}{{\mathbb R}}
\newcommand{\N}{{\mathbb N}}

\newcommand{\Mm}{\mathcal{M}}
\newcommand{\oMm}{\ov{\mathcal{M}}}

\newcommand{\om}{\omega}
\newcommand{\al}{\alpha}
\newcommand{\be}{\beta}
\newcommand{\de}{\delta}
\newcommand{\ga}{\gamma}
\newcommand{\la}{\lambda}
\newcommand{\ka}{\kappa}

\newcommand{\se} {{\stackrel{s}\hookrightarrow}}
\renewcommand{\epsilon}{\varepsilon}

\begin{document}

\title{A remark on the stabilized symplectic embedding problem for ellipsoids}
 \author{Dusa McDuff}
  \thanks{partially supported by NSF grant DMS1308669}
\address{Department of Mathematics,
 Barnard College, Columbia University}
\email{dusa@math.columbia.edu}
\date{revised November 19, 2018}
\keywords{stabilized symplectic capacity, symplectic embeddings of ellipsoids, embedded contact homology}
\subjclass[2010]{53D05,57R17}

\begin{abstract}
This note constructs sharp obstructions for stabilized symplectic embeddings of an ellipsoid into a ball, in the case when the initial 
$4$-dimensional ellipsoid has \lq eccentricity' of the form $3\ell-1$ for some integer $\ell$.   This version is revised (after publication) to correct some of the examples.
\end{abstract}

\maketitle

\section{Preliminaries}

The question of when one symplectic ellipsoid embeds symplectically into another of the same dimension has turned out to be very fruitful, its answer giving a beautiful  illustration of the tension between rigidity and flexibility in symplectic geometry.  The situation in four dimensions is completely known (at least in principle), while there are many 
open questions in higher dimensions.  (For a survey, see Schlenk \cite{Schl}.) This brief note concerns the stabilized problem, in which one tries to use knowledge of the four-dimensional problem to understand symplectic embeddings $$
\Phi: \Ee\times \R^{2k}\;\;\se\;\;  \Ee'\times \R^{2k},
$$
 where $\Ee,\Ee'$ are four-dimensional and $k>0$.   It was discovered by Hind--Kerman and Cristofaro-Gardiner--Hind in \cite{HK,CGH} that some  four-dimensional embedding obstructions persist when stabilized: but not all --- for example,   the volume obstruction obviously disappears.    This note explains a simple new method for finding embedding obstructions, that gives sharp results 
for the case when $\Ee = E(1,3m-1)$ and $\Ee'$ is a ball.  

Identify Euclidean space  $\R^{2n}$ with $\C^n$  by taking
$$
(x_1,y_1,\cdots,x_n, y_n) \;\;\equiv\;\;(z_1,\cdots z_n),\;\; \mbox{ where } \;\; z_j = x_j+iy_j, 
$$
 and consider the standard symplectic form  $\om_0 = \sum_{j=1}^n dx_j\wedge dy_j$.
Every ellipsoid in Euclidean space is equivalent under a symplectic linear transformation to one in standard form
$$
E(a_1,\dots,a_n): = \Bigl\{z\ \big| \ \pi \sum_j \frac {|z_j|^2}{a_j} \le 1\Bigr\},\qquad 0<a_1<\dots< a_n.
$$
Thus $E(a,a)=: B^4(a)$ denotes the $4$-ball of \lq\lq capacity"\footnote
{
The capacity, being an area,  is a much more natural measure in symplectic geometry than the radius.}
 $a$, and hence radius $r$ where $a=\pi r^2$.
We are interested in calculating the function
$$
c_k(x) : = \inf \bigl\{ \mu : E(1,x)\times \R^{2k}\;\se\; B^4(\mu)\times \R^{2k}\bigr\},\quad x\ge 1, \ k\ge 0,
$$
where we write $U\se V$ if $U,V$ are subsets of $\R^{2n}$ and there is a symplectic embedding\footnote
{
A  symplectic embedding $f:U\to V$  is a smooth injective map that preserves the symplectic form, i.e. $f^*(\om_0) = \om_0$.} 
 from $U$ to $V$.
It is not hard to show that this function is continuous and nondecreasing.    
When $k=0$, it was calculated in \cite[Thm~1.1.2]{MS}: 
\begin{itemize}\item
$c_0(x)$  is piecewise linear when $x<\tau^4$ (where $\tau = \frac{1+\sqrt 5}2$ is the golden mean), with  
intervals on which  $c_0$ has the form $x\mapsto \la x$ alternating with  intervals on which it is constant, and where the numerics (e.g. slopes) are governed by the odd index Fibonacci numbers $f_{2i+1}, i\ge 0$; 
 \item in particular  the left endpoints of the constant intervals lie at $x=\frac{f_{2i+5}}{f_{2i+1}}$, and the constant values are  
 $\frac{f_{2i+5}}{f_{2i+3}}$ for all $i\ge0$;
 \item $c_0(x)$  equals the volume constraint $vol(x) =\sqrt x$ for $x>(\frac{17}6)^2$ --- notice that if $E(1,x)\se B^4(\mu)$ then the volume $\frac x2$ of $E(1,x)$ cannot be more that the volume   $\frac {\mu^2}2$ of the ball;
 \item the interval $\tau^4 < x < (\frac{17}6)^2$ is a transitional region with a finite number of intervals on which $c_0$ is linear, complemented by intervals on which it equals $\sqrt x$.
\end{itemize}
The part of this graph  over $[1,\tau^4]$ is called the Fibonacci stairs, and it was proven in \cite{CGH} that this stabilizes; in other words
\begin{align}\label{eq:lesstau}
c_k(x) = c_0(x),\quad 1\le x\le \tau^4.
\end{align}
In particular, because 
$3f_{k+3} = f_{k+1} + f_{k+5}$ for all $k$,
we find that
$$
c_k(x) = \frac{3x}{x+1} \quad \mbox{when } x = \frac{f_{2i+5}}{f_{2i+1}}< \tau^4.
$$
Due to an explicit  folding construction of Hind~\cite{Hi}, we also know that 
\begin{align}\label{eq:conj}
& c_k(x) \le \frac{3x}{x+1},\quad \mbox{ when }\; x> \tau^4,\,\, k>0.
\end{align}

As is shown by Remark~\ref{rmk:autreg}~(ii),  this function $x\mapsto  \frac{3x}{x+1}$ has a  geometric connection to the embedding problem via the Fredholm index.\footnote
{
See \cite{CGHM} for further discussion of the significance of this function.}  
Further, its graph crosses the volume obstruction $x\mapsto \sqrt x$ precisely at the point $x=\tau^4$.  Hence
it is natural to conjecture that \eqref{eq:conj} should be an equality. To this end, we know from \cite{CGHM} that
\begin{align}\label{eq:CGHM}
&c_k(x) = \frac{3x}{x+1} \quad \mbox{when }\; x = \frac{f_{4i+6}}{f_{4i+2}},\quad i\ge 0,
\end{align}
i.e. at the points $\frac 81, \frac{55}8, \frac{377}{55}$ and so on.   Further, by
building on the results in \cite{CGH,CGHM}, Hind--Kerman show in \cite{HK,HK2} that 
\begin{align*}
c_k(x) = \frac{3x}{x+1} = \frac{3f_m-1}{f_m},\quad  \mbox{ if }\; x = 3f_m-1 \mbox{ where } m\not\equiv_4 2.
\end{align*}

In this note we extend this result to the case when $x$ is an arbitrary integer of the form $3m-1$.

\begin{theorem}\label{thm:1} $c_k(3m-1) = \frac {3m-1}m$ for all integers $m>0$ and all $k\ge 1$.
\end{theorem}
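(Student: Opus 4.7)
The upper bound $c_k(3m-1)\le\frac{3m-1}{m}$ is immediate from \eqref{eq:conj}, since $\frac{3(3m-1)}{(3m-1)+1}=\frac{3m-1}{m}$; the substance of the theorem lies in the matching lower bound. The plan is to follow the $J$-holomorphic-curve strategy of \cite{HK,HK2,CGHM}, but to streamline the existence step so that it applies uniformly for every integer $m\ge 1$ rather than only when $3m-1$ belongs to a Fibonacci-type subsequence.

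Suppose, for contradiction, that there is a stabilized embedding $E(1,3m-1)\times\R^{2k}\se B^4(\mu)\times\R^{2k}$ with $\mu<\frac{3m-1}{m}$. Confining the $\R^{2k}$-factor on both sides to a large polydisc $B^{2k}(S)$ and embedding source and target into a closed symplectic manifold of dimension $2k+4$, one can neck-stretch along the image of $\p E(1,3m-1)\times B^{2k}(S)$. The aim is to extract, from a generic degree-$m$ rational curve on the target side, a broken holomorphic building whose ``lower'' piece $C$ in the symplectization of $\p E(1,3m-1)\times\R^{2k}$ is genus zero with a \emph{single} negative puncture, asymptotic to the $(3m-1)$-fold cover of the short Reeb orbit on $\p E(1,3m-1)\times\{0\}$. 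For this choice of asymptotics and degree the Fredholm index of $C$ works out to zero---this is precisely the numerology behind Remark~\ref{rmk:autreg}~(ii) that ties $x\mapsto\frac{3x}{x+1}$ to the index condition---so $C$ is automatically regular and, by the stabilization principle for index-zero $4$-dimensional curves, gives rise to a corresponding curve in the stabilized cobordism. A standard area/energy computation on such a curve then yields $m\mu\ge 3m-1$, contradicting $\mu<\frac{3m-1}{m}$.

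The main obstacle, as in all previous work on this problem, is the existence of the $4$-dimensional curve $C$ for arbitrary integer $m$. The cases \eqref{eq:CGHM} and $x=3f_j-1$ exploit the Fibonacci recursion to produce $C$ inductively, starting from an explicit low-degree curve and climbing the stairs. For a generic $m$ I would instead attempt a deformation argument, running along a path in the ellipsoid eccentricity from a nearby Fibonacci corner $x=3f_j-1$ to $x=3m-1$, using that the expected dimension of the relevant moduli space is zero and that the prescribed asymptotic constraints are rigid. The delicate step is to rule out bubbling and splitting-off of extra Reeb cylinders along the deformation; the integrality hypothesis $x=3m-1$ should be exactly what forces the combinatorics of asymptotics to degenerate to the single-end configuration described above, and this is presumably the observation behind the ``simple new method'' promised in the abstract.
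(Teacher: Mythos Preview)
Your overall framework is right---the upper bound is Hind's folding~\eqref{eq:conj}, and the lower bound must come from producing, for each $m$, an index-zero genus-zero curve in $\ov X_x$ of degree $m$ with a \emph{single} negative end on $\be_1^{3m-1}(x)$ for $x$ just above $3m-1$, which then stabilizes via Proposition~\ref{prop:stab}. But the two mechanisms you propose for producing that curve both fail, and the paper's method is genuinely different from either.

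First, the neck-stretching idea: you suggest starting from ``a generic degree-$m$ rational curve on the target side'' and stretching to extract the desired single-ended piece. This is exactly the approach that works for $m=1,2$ (stretch the exceptional spheres $L-E_{12}$ and $2L-E_{1\dots5}$), and the paper explicitly notes that it \emph{fails} for $m\ge 3$: there is a suitable exceptional class, but after stretching the top piece has more than one negative end (see the proof of Lemma~\ref{le:3k-1} and \cite[Remark~3.5.4(i)]{CGHM}). So there is no off-the-shelf curve to stretch.

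Second, your deformation idea cannot work as stated. Automatic regularity (Remark~\ref{rmk:autreg}(i)) does let you vary $x$ while keeping a curve in $\Mm(m,x,t)$, but the degree $m$ and end multiplicity $t$ are discrete invariants that do not change along such a deformation. The Fibonacci input gives curves in $\Mm(f_j,\,\cdot\,,3f_j-1)$; sliding $x$ from $3f_j-1$ to $3m-1$ just gives you a curve of degree $f_j$ and end multiplicity $3f_j-1$ at the new parameter, yielding the bound $\tfrac{3f_j-1}{f_j}$ rather than $\tfrac{3m-1}{m}$. No amount of ruling out bubbling fixes this: you need a new curve of a new degree, not a persistence statement.

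What the paper actually does is an inductive \emph{construction} using Hutchings--Taubes obstruction bundle gluing. Given $C_m\in\Mm(m,y,3m-1)$ and the base curve $C_1\in\Mm(1,y,2)$ at some $y\in(3m-1,3m+1)$, the partition $(3m-1,2)$ of $3m+1$ satisfies $\lceil\tfrac{3m-1}{y}\rceil+\lceil\tfrac{2}{y}\rceil=\lceil\tfrac{3m+1}{y}\rceil$, so Proposition~\ref{prop:HT} glues $C_m\sqcup C_1$ through an index-zero branched cover in the neck to a somewhere-injective index-zero cylinder in $\Mm_{cyl}(\be_1^{3m+1}(y),\be_1^{3m+2}(x))$ (supplied by Lemma~\ref{le:1}). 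The output is an element of $\Mm(m+1,x,3m+2)$, completing the induction. The ``simple new method'' is precisely this gluing step, not a deformation or compactness argument.
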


Note that the first two values, $c_k(2), c_k(5)$ are known by \eqref{eq:lesstau}, while $c_k(8)$ is known by the case $i=0$
of \eqref{eq:CGHM}.    The argument given here
uses knowledge of the cases $m=1,2$ to calculate $c_k(3m-1)$ for $m\ge 3$.

\MS

\NI{\bf Note:}  This paper is revised to correct the details of some of the examples: see Remarks~\ref{rmk:trivcyl}, ~\ref{rmk:glu} and Example~\ref{ex:1}.  The published proof of the main theorem is correct.
\MS

\NI {\bf The stabilization theorem}

The proofs of these results about  $c_k, k>0,$  rely on the following stabilization result,
which was proved in \cite[Proposition~3.6.6]{CGHM} using arguments originating in \cite{HK,CGH}.  Before stating it, we describe the basic geometry. (For more information, see for example \cite[\S2.2,\S3.1]{CGHM}.)

We denote by $\C P^2(\mu)$ the projective plane equipped with the standard (Fubini--Study) symplectic form $\om_{\FS}$, scaled so that the line has area $\mu$.  The manifold  $\C P^2(\mu)$ may be obtained from the ball $B^4(\mu)\subset (\R^4, \om_0)$ by identifying its boundary to a (complex) line via the Hopf map.  We assume $\mu$ chosen so that there is a symplectic embedding  $\Phi:E(1,x)\to {\rm int\,} B^4(\mu)\subset \C P^2(\mu)$, and
denote by $(\ov{X}_x, \om)$ the negative completion\footnote
{
An open symplectic manifold is said to be complete if its ends are either positive (modelled on $Y \times [0,\infty)$  with symplectic form $d(e^{s}\al)$ where $(Y,\al)$ is some contact manifold and $s$ is the coordinate on $\R$) or negative (modelled on  $Y \times (-\infty,0]$  with symplectic form $d(e^{s}\al)$).  Thus
 positive ends are convex (expanding towards $+\infty$), while negative ends are concave (contracting as one moves towards $-\infty$). If $(Y,\al)$ is a contact hypersurface in $(X,\om)$ the process of {\it stretching the neck} elongates the collar around $Y$ so that the manifold splits into two pieces $X^+, X^-$.  The converse gluing process attaches the (truncated) negative end of $X^+$ to the (truncated) positive end of $X^-$ as in Proposition~\ref{prop:HT} below. }
 of $\C P^2(\mu)\less \Phi(E(1,x))$.  Thus we remove the interior of 
$\Phi(E(1,x))$ and replace it by a copy of $\p E(1,x) \times (-\infty,0]$  with symplectic form $d(e^{s}\al)$, where $s$ is the coordinate on $(-\infty,0]$ and the $1$-form $\al $ on $\p E(1,x)$  is chosen so that  $\Phi$ pushes $d\al$ forward to the restriction $\om_{\FS}|_{\Phi(\p E(1,x)}$.   Thus $\al$ is a contact form on $\p E(1,x)$ whose Reeb vector field $R_\al$ is a multiple of the symplectic gradient\footnote
{
The symplectic gradient vector field $X_H$ is given by the identity $\om(X_H,\cdot) = dH$, and is tangent to the kernel of $\om|_{H=const.}.$}
 of 
 the defining function
 $H: = \frac \pi 2 (|z_1|^2 + \frac{|z_2|^2}{x})$ of the ellipsoid  boundary $\p E(1,x)$.  In particular, when $x>1$ is irrational, the Reeb flow has precisely two  simple closed orbits, the short orbit $\be_1(x)$ round the circle $z_2=0$ in $\p E(1,x)$ that bounds a disc of area $1$ and the long orbit $\be_2(x)$ round the circle $z_1=0$ in $\p E(1,x)$ that bounds a disc of area $x$.   

We consider almost complex structures $J$ on 
 $\ov{X}_x$ that are 
 \begin{itemize}\item
 tamed by $\om$ (i.e. $\om(v,Jv)>0$ for all nonzero tangent vectors $v$), and also 
 \item compatible with the negative end, i.e. are invariant under translation by $s\in \R$ and also 
 have the property that $R_\al = J(\frac{\p}{\p s})$ where $R_\al$ is the Reeb vector field on $\p E(1,x)$ as above. 
 \end{itemize}

It was discovered by Hofer~\cite{Ho} that there is a good theory of finite energy $J$-holomorphic curves in manifolds such as $\ov{X}_x$.  
These curves\footnote
{
Strictly speaking, by \lq\lq curve" we mean an equivalence class of maps $u:(\dot\Si,j)\to ( \ov X_\mu, J)$, where we quotient out by 
the finite dimensional group of biholomorphisms of the domain. In the situations considered here, this group is  trivial whenever 
the total number of  punctures or marked points on the domain is  at least three.}
 $C$ are given by proper maps
$$
u: (\dot\Si,j)\to ( \ov X_\mu, J),\qquad du\circ j = J(u) \circ du,
$$
where $(\dot\Si,j)$ is a punctured Riemann surface, that satisfy a finite energy condition.  This condition  is designed so that the image of the restriction of $u$ to a sufficiently small neighbourhood of a puncture is a cylinder in the negative end  $\p E(1,x) \times (-\infty,0]$ that converges exponentially fast to a multiple cover 
of one of the periodic orbits  of the Reeb flow on $\p E(1,x)$.
Further, one can show that these maps $u$
are solutions of a Fredholm operator  that is defined on a suitable space of maps $u: \dot\Si\to \ov{X}_x$.  In the situation at hand, 
the Fredholm index\footnote
{
See \cite[\S2]{CGHM} for more details of these calculations.
Further, the degree of $C$  is the number of times it intersects  the line at infinity.} 
 of a genus zero curve $C$, with degree
 $m$  and $n_1 + n_2$ negative ends is
$$
\tfrac 12 \ind(C) = -1 + 3m - \sum_{i=1}^{n_1}\Big(t_i + \Big\lfloor \frac{t_i}x\Big\rfloor\Big) - \sum _{j=1}^{n_2}\Big(r_j + \Big\lfloor r _jx\Big\rfloor\Big),
$$
where  $t_1,\dots,t_{n_1}$ are the multiplicities of the $n_1$ ends on $\be_1(x)$  and $r_1,\dots,r_{n_2}$ are those of the $n_2$ ends on $\be_2(x)$.
Since $J$ is tamed by $\om$, $\om$ pulls back to  an area form on $\dot\Si$. 
By ignoring contributions to $\int_{\dot \Si} u^*\om$ from the trivial (cylindrical) directions $\frac{\p}{\p s}, J(\frac{\p}{\p s}) = R_\al$ in the cylindrical end of $\ov X_x$, one can define the notion of the {\it action} of $C$.  This is finite and positive for finite energy curves, and in the case at hand equals
$m\, \mu - \sum t_i - \sum_j xr_j $.   Hence, if such a curve $C$ exists and persists under perturbations, we obtain  the lower bound
$\mu \ge \frac 1m\bigl(\sum t_i + \sum_j xr_j\bigr)$ for the size of a ball that contains a copy of $E(1,x)$.  
Many obstructions to symplectic embeddings come from the existence of such curves.

\begin{defn}  
If $x$ is irrational  and $3m = t + \lceil \frac tx \rceil$, we define $\Mm\bigl(m,x,t\bigr)$ to be
the moduli space of genus zero curves in $\ov X_x$, of degree $m$, and  with one negative end of multiplicity $t$ on the short orbit $\be_1(x)$ on the ellipsoid.  
  Its elements have Fredholm  index 
$$
\ind(C): = 2\Big(-1 + 3m - t - \Big\lfloor\frac t{x}\Big\rfloor\Big) =2\Big(3m - t -\Big\lceil \frac t{x}\Big\rceil\Big) = 0.
$$
\end{defn}

\begin{rmk}\label{rmk:autreg}\rm  (i) By the principle of \lq\lq automatic regularity" for genus zero curves in $4$-manifolds (see \cite{W}), the number of curves in 
 $\Mm(m, x, 3m-1)$ is constant 
as $\om$ and $ J$  are varied, and they all count positively.  In particular, curves in $\Mm(m, x, 3m-1)$ do not disappear as  the embedding $\Phi: E(1,x)\to B^4(\mu)$ and the parameter $\mu$ change.
\MS

\NI (ii)  If $x = \frac pq + \eps$ and $t=p$ then $1 + \lfloor\frac t{x}\rfloor = \lceil \frac t{x}\rceil = q$ for small $\eps>0$.  Hence in this case the index condition is $3m=p+q$, which gives the lower bound  $\mu\ge \frac pm = \frac{3p}{p+q}$.
\hfill$\er$
\end{rmk}

Here is the key stabilization result from \cite{CGHM,HK,CGH}.
 The basic reason why it holds is that the condition that the curve $C\in \Mm(m,x,t)$ has genus zero and one negative end means that its Fredholm index does not change when it is embedded in the product $\ov X_x\times \R^{2k}$.  
 Hence it  persists under stabilization, and  one can show that  it also persists as the embedding of the ellipsoid is 
 deformed.  As explained above, the embedding obstruction comes from the fact that its action  must be positive.    
  
\begin{prop}\label{prop:stab}{\bf (Stabilization)}  Suppose that the index condition
$3m = t + \lceil \frac tx \rceil$ holds, and that
 $\Mm(m,x,t)$ is nonempty.  Suppose further 
that there is no decomposition $m= \sum_{i=1}^n m_i, t = \sum _{i=1}^n t_i$ with $n>1$, $m_i, t_i>0, \forall i$, and such that
\begin{align}\label{eq:decomp} 3m_i = t_i +  \Big\lceil\frac{t_i}x\Big\rceil\qquad \forall \ i.
\end{align} 
Then $c_k(x) \ge \frac tm.
$
\end{prop}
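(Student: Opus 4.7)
The plan is to argue by contradiction. Suppose $\mu < t/m$ and there is a stabilized embedding $\Phi_k\colon E(1,x)\times \R^{2k}\se B^4(\mu)\times \R^{2k}$. After truncating the $\R^{2k}$ factor, I would embed a large slab $B^4(\mu)\times B^{2k}(R)$ into an ambient closed symplectic manifold $Z$ (for instance a product $\C P^2(\mu)\times M^{2k}$ with $M$ chosen so the relevant pieces fit symplectically), so that $\Phi_k$ restricted to $E(1,x)\times B^{2k}(R')$ has image inside $Z$ for some $R'<R$. Removing this image and attaching a negative cylindrical end gives the stabilized negative completion $\ov X_x^{(k)}$, on which I would pick an $\om$-tame, end-compatible almost complex structure $J_k$ that on the cylindrical end equals the product $J\oplus J_{\rm std}$, with $J$ a standard $4$-dimensional choice on $\ov X_x$.

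Two facts stated in the passage above do the heavy lifting. First, for a genus-zero curve with a \emph{single} negative end, the Fredholm index is unchanged under stabilization, because the extra $\R^{2k}$ normal directions contribute symmetrically to the index formula. Thus the $4$D curve $C_0\in\Mm(m,x,t)$ (nonempty by hypothesis) lifts, as the product of $C_0$ with a point in $\R^{2k}$, to a curve $C_0^{(k)}$ in $\ov X_x^{(k)}$ of index zero. Second, the automatic regularity principle invoked in Remark~\ref{rmk:autreg}(i) applies in this stabilized setting, so the resulting $0$-dimensional moduli space is invariant under tame deformations of $J_k$. Hence $C_0^{(k)}$ persists as I deform $J_k$ through tame end-compatible structures so as to realize the image $\Phi_k(E(1,x)\times B^{2k}(R'))$ as a genuine $J_k$-pseudoconvex region inside $Z$.

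Next I would stretch the neck along $\p(B^4(\mu)\times B^{2k}(R'))$ and apply SFT compactness to extract a limiting holomorphic building out of $C_0^{(k)}$. By the action formula quoted in the preamble, the total $\om$-area of $C_0^{(k)}$ relative to its single end is $m\mu - t$, so to derive the bound $m\mu\ge t$ it suffices to show this budget is carried by a single top-level component of type $(m,t)$. Here the no-decomposition hypothesis \eqref{eq:decomp} enters: each component of the limit building has nonnegative Fredholm index by regularity, and these indices sum to $0$, so each is exactly $0$ and thus satisfies $3m_i = t_i + \lceil t_i/x\rceil$ on its own data $(m_i,t_i)$. By hypothesis, no nontrivial partition of $(m,t)$ is consistent with these equations, so the top level must be a single curve of type $(m,t)$, whose positivity of action forces $m\mu - t\ge 0$, contradicting $\mu<t/m$.

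I expect the principal obstacle to be the SFT compactness and index-additivity bookkeeping. One must verify that the broken building genuinely decomposes into index-$0$ pieces indexed by a partition of $(m,t)$, ruling out hidden multiply-covered or trivial-cylinder components that would distort the combinatorics, and one must check that automatic transversality carries through stabilization, i.e.\ that the extra $\R^{2k}$ directions do not introduce kernel in the linearized Cauchy--Riemann operator for curves of this type. The machinery of \cite{CGHM}, building on \cite{HK,CGH}, is designed precisely for these points, so the core task is to check that its hypotheses are met under the combinatorial assumptions stated in the proposition.
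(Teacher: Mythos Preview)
The paper does not give a proof of this proposition; it is quoted from \cite[Prop.~3.6.6]{CGHM}, and the paragraph preceding it offers only the heuristic that a genus-zero curve with one negative end keeps its Fredholm index under stabilization, hence persists in $\ov X_x\times\R^{2k}$ and under deformation of the embedding, with positivity of action then giving the bound. Your outline tracks this heuristic and correctly locates the role of hypothesis~\eqref{eq:decomp}: it is what prevents the curve from breaking into several index-zero pieces as one deforms.

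Two concrete points in your sketch are off. First, the automatic regularity of Remark~\ref{rmk:autreg}(i) (Wendl's theorem~\cite{W}) is a strictly four-dimensional result and does \emph{not} apply in $\ov X_x\times\R^{2k}$. What \cite{CGHM} uses instead is that for the product curve $C_0\times\{\mathrm{pt}\}$ the linearized operator splits, and its normal piece is the standard $\dbar$ on a trivial $\C^k$-bundle over a genus-zero domain, which is surjective; this gives regularity of the stabilized curve directly, but persistence along a generic one-parameter family still requires a genuine compactness argument rather than an appeal to automatic transversality. Your final paragraph acknowledges this issue, but the body of the argument should not invoke Remark~\ref{rmk:autreg}(i) as if it settled the higher-dimensional case.

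Second, the logic after your deformation step is tangled. Once a curve of type $(m,t)$ is known to exist in the completion built from the \emph{given} stabilized embedding $\Phi_k$ with target capacity $\mu$, positivity of action immediately gives $m\mu>t$; there is no further ``budget'' to distribute and no reason to stretch along $\p\bigl(B^4(\mu)\times B^{2k}(R')\bigr)$. The SFT compactness step and the use of~\eqref{eq:decomp} belong \emph{earlier}, during the deformation from the product embedding (where $C_0\times\{\mathrm{pt}\}$ lives) to $\Phi_k$: along a generic path the parametrized moduli space is a one-dimensional cobordism, and were it to acquire a noncompact end the limiting building's cobordism-level components would realize a decomposition $(m_i,t_i)$ each satisfying $3m_i=t_i+\lceil t_i/x\rceil$. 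The hypothesis forbids this with $n>1$, so the curve survives to the far end of the family and the action bound follows there.
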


\begin{cor}\label{cor:1}  If $\Mm(m,3m-1+\eps,3m-1)$ is nonempty for all $\eps>0$, then 
$$
c_k(3m-1) = \frac {3m-1}m = \frac{3x}{1+x}, \qquad x: = 3m-1.
$$
\end{cor}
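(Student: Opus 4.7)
The strategy is to sandwich $c_k(3m-1)$ between matching upper and lower bounds. For the upper bound, observe that at $x = 3m-1$ one has $\tfrac{3x}{x+1} = \tfrac{3(3m-1)}{3m} = \tfrac{3m-1}{m}$. For $m \ge 3$ this gives $x = 3m-1 \ge 8 > \tau^4$, so Hind's folding bound \eqref{eq:conj} yields $c_k(3m-1) \le \tfrac{3m-1}{m}$ directly; the cases $m = 1, 2$ are subsumed by \eqref{eq:lesstau} together with the values of $c_0$ at $x = 2, 5$ recorded in \cite{MS}. Only the lower bound $c_k(3m-1) \ge \tfrac{3m-1}{m}$ requires new input, and here I would apply Proposition~\ref{prop:stab} to the irrational datum $x_\eps := 3m-1+\eps$ with $t := 3m-1$, then let $\eps \downarrow 0$ using continuity of $c_k$.

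The hypotheses of Proposition~\ref{prop:stab} reduce to two arithmetic checks. The index condition is immediate: for $\eps > 0$ small, $0 < t/x_\eps < 1$, so $\lceil t/x_\eps\rceil = 1$ and $t + \lceil t/x_\eps \rceil = (3m-1) + 1 = 3m$. For the no-decomposition condition, suppose $m = \sum_{i=1}^n m_i$ and $t = \sum_{i=1}^n t_i$ with $m_i, t_i \ge 1$ and $3m_i = t_i + \lceil t_i/x_\eps \rceil$ for each $i$. Since $1 \le t_i \le t = 3m-1 < x_\eps$, we have $\lceil t_i/x_\eps \rceil = 1$, hence $t_i = 3m_i - 1$. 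Summing gives
\[
3m - 1 \;=\; t \;=\; \sum_{i=1}^n (3m_i - 1) \;=\; 3m - n,
\]
which forces $n = 1$. Thus no decomposition with $n > 1$ satisfies \eqref{eq:decomp}.

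With both hypotheses verified, the nonemptiness of $\Mm(m, x_\eps, t)$ assumed in the corollary lets Proposition~\ref{prop:stab} deliver $c_k(x_\eps) \ge \tfrac{3m-1}{m}$ for all sufficiently small $\eps > 0$, and continuity of $c_k$ in $x$ promotes this to $c_k(3m-1) \ge \tfrac{3m-1}{m}$, matching the upper bound. There is no genuine obstacle in this corollary: it is essentially a direct repackaging of Proposition~\ref{prop:stab}, and the short arithmetic above is its full content. The real difficulty—producing the index-zero curve, i.e.\ verifying nonemptiness of $\Mm(m, 3m-1+\eps, 3m-1)$ for arbitrary $m$—lies downstream and is what the remainder of the paper must address.
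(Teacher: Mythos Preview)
Your proof is correct and follows essentially the same route as the paper: verify the index condition and the no-decomposition condition for Proposition~\ref{prop:stab} at $x=3m-1+\eps$, invoke continuity of $c_k$ for the lower bound, and use Hind's folding construction for the upper bound. Your arithmetic for ruling out decompositions (summing $t_i=3m_i-1$ to force $n=1$) is a slightly more explicit version of the paper's observation that $3m_i < t_i + \lceil t_i/x\rceil$ must fail for some $i$, and your separate handling of $m=1,2$ via \eqref{eq:lesstau} is a harmless refinement of the paper's blanket appeal to Hind's bound.
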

\begin{proof}   For any decomposition of the form
$$
m= \sum_{i=1}^n m_i, \quad t= 3m-1 = \sum_{i=1}^nt_i, \mbox{ where }  \; m_i> 0,\;\; n>1,
$$ 
we have  $ \lceil \frac{t_i}{x}\rceil = 1$,
so that  $3m_i <  t_i + \lceil \frac{t_i}x\rceil $ for at least one $i$.  Hence there is no  decomposition of the form~\eqref{eq:decomp}, so that  Proposition~\ref{prop:stab} implies that $c_k(x)\ge \frac{3m-1}{m}$ for all irrational $x>3m-1$.  But  $c_k(x)$ is continuous.  (As explained in \cite{MS} this holds because $E(1,x)\subset E(1,\la x) \subset \la E(1,x)$  for all $\la \ge 1$.)  Moreover, Hind shows in \cite{Hi} that
$c_k(x)\le \frac{3x}{x+1}$ for all $x$.   Hence we must have equality at the integer $x=3m-1$.
\end{proof}

\begin{rmk}\label{rmk:stab}\rm  The conditions in Proposition~\ref{prop:stab} hold for the triples $$
(m,x,t) = \Big(\frac{p+q}3, \frac pq + \eps, p\Big)
$$ provided that $\gcd(p,q) = 1$ and $\eps>0$ is sufficiently small.  Indeed in this case 
$\frac px$ is just less than $q$, while  if $t_i<p$ the difference $ \lceil \frac{t_i}x\rceil - \frac {t_i}x$ is at least $\frac 1p$. Similarly, it
is not hard to check that if  $\gcd(3,p+q) = 1 = \gcd(p,q)$, then triples of the form $(p+q,\frac pq+ \eps, 3p)$ such as $(8=7+1, 7+\eps, 21)$ also satisfy these conditions.  However,  if 
 $\gcd(p,q) \ne 1,3$ there is $n>1$ such that $m_i = \frac{p+q}{3n}, t_i = \frac pn$ are integers, so that there is a decomposition as in \eqref{eq:decomp} with $n$ equal terms.
 
 \end{rmk}

\section{Constructing curves}

We now show how to construct curves $C$ that satisfy the conditions of Proposition~\ref{prop:stab}. 
There are different ways to do this.  In \cite{HK,CGHM}, the idea was to start with known spheres in a suitable blow up of $\C P^2$ and then \lq\lq stretch the neck" around the boundary of the ellipsoid, while \cite{CGH} worked directly in $\ov X_x$ and used results from embedded contact homology (ECH).  
The short note \cite{HK2} takes the genus zero curves constructed in \cite{CGH,CGHM} and attaches cylinders to obtain obstructions at the numbers $3f_k-1$, where $f_k$ is the $k$th Fibonacci number for $k$  not divisible by $4$.   In this note,  we attach cylinders as in \cite{HK2} to curves
that are constructed using the technique of obstruction bundle gluing that was developed by Hutchings--Taubes in \cite{HT} during the course of their work on ECH.  
\MS

\NI {\bf Obstruction bundle gluing}

   In the following lemma we use $y$ as the parameter for the ellipsoid, since in the application we will be gluing at $\p E(1,y)$ to obtain a curve with negative end on $\p E(1,x)$.
For short,  we will denote by $\be_1(y)^s$ (or $\be_1^s$) the $s$-fold cover of the short orbit on $\p E(1,y)$. 

\begin{lemma}\label{le:2}   Let $(p_i): = (p_1,\dots,p_r)$ be a partition of $s: = \sum p_i$ with $p_1>p_2>\dots > p_r$ such that for some irrational $y>1$ we have
$\sum_i \lceil \frac{p_i}y\rceil =   \lceil \frac sy \rceil$.  Then there is a genus $0$ and index $0$  trajectory in the symplectization $\p E(1,y)\times \R$ with
$r$ positive ends on $\be_1$ of multiplicities $p_1,\dots, p_r$ and  one negative end  asymptotic to $\be_1^s$.  
\end{lemma}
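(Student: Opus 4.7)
The plan is to verify that the hypothesis $\sum_i \lceil p_i/y \rceil = \lceil s/y \rceil$ is equivalent to the Fredholm index of the desired curve being zero, and then to produce such a curve by obstruction bundle gluing following Hutchings--Taubes~\cite{HT}. In the symplectization $\p E(1,y)\times\R$ the relative first Chern class of any curve with all ends on the simple orbit $\be_1$ vanishes, and the Conley--Zehnder index of $\be_1^k$ in the standard ellipsoid trivialization is $\op{CZ}(\be_1^k) = 2\lfloor k/y\rfloor + 2k + 1$. A genus-zero domain with $r+1$ punctures has $\chi = 1-r$, so
$$
\ind = (r-1) + \sum_{i=1}^r\bigl(2\lfloor p_i/y\rfloor + 2p_i + 1\bigr) - \bigl(2\lfloor s/y\rfloor + 2s + 1\bigr) = 2\Big((r-1) + \sum_i \lfloor p_i/y\rfloor - \lfloor s/y\rfloor\Big).
$$
Because $y$ is irrational and $p_i,s$ are positive integers, $\lceil n/y\rceil = \lfloor n/y\rfloor + 1$, so the hypothesis rearranges to $\sum_i\lfloor p_i/y\rfloor=\lfloor s/y\rfloor-(r-1)$, i.e.\ $\ind=0$.

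\textbf{Branched cover model and obstruction bundle gluing.} Topologically the curve will be modeled on a degree-$s$ holomorphic branched cover $u_0\colon \C P^1\to \C P^1$ with ramification profile $(p_1,\dots,p_r)$ above $\infty$ and total ramification $(s)$ above $0$. Riemann--Hurwitz forces exactly $r-1$ further simple branch points, and such covers exist because the relevant Hurwitz number---counting factorizations of an $s$-cycle as a product of $r-1$ transpositions and a permutation of cycle type $(p_1,\dots,p_r)$ in $S_s$---is positive. Composing $u_0$ with the natural $s$-fold cover $\C P^1\setminus\{0,\infty\}\to \be_1(y)\times\R$ of the trivial cylinder yields a multiply covered, $J_0$-holomorphic map into the symplectization for the canonical $\R$-invariant almost complex structure $J_0$; the space of such maps has real dimension $2(r-1)$ modulo $\R$-translation and accounts for the full cokernel of the linearized Cauchy--Riemann operator. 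The Hutchings--Taubes theorem of \cite{HT} provides a canonical obstruction bundle of matching rank $2(r-1)$ over this moduli space, together with a distinguished section arising from the non-$\R$-invariant deformation of $J_0$, whose signed zeros count the nearby honest $J$-holomorphic curves.

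\textbf{Main obstacle.} The crux of the proof is showing that this signed count is nonzero under the stated hypothesis: the equality $\sum_i\lceil p_i/y\rceil=\lceil s/y\rceil$ should be precisely the ECH partition-matching condition identified in \cite{HT} under which the gluing coefficient is positive. Once a single such curve is produced, automatic regularity for genus-zero curves in $4$-manifolds (Remark~\ref{rmk:autreg} and \cite{W}) guarantees that it cannot be cancelled by other contributions. I expect this final combinatorial verification to be the main technical hurdle; the preceding index calculation and Riemann--Hurwitz count are routine bookkeeping once it is in place.
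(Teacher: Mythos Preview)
You have correctly computed the index and constructed the branched cover, but you have misread what the lemma is asking for and have conflated it with the subsequent gluing proposition.  The lemma only asserts the existence of a genus-zero, index-zero $J$-holomorphic trajectory in the symplectization; it does \emph{not} require this curve to be regular, somewhere injective, or to persist under perturbation of $J$.  The branched cover $C_{neck}$ of the trivial cylinder $\be_1\times\R$ that you wrote down is already $J_0$-holomorphic for the standard $\R$-invariant almost complex structure, so once the index is verified to be zero the proof is finished.  This is exactly what the paper does: one line defining $C_{neck}$ plus the index computation
$$
\tfrac12\ind(C_{neck}) = -1 + r + \sum_i p_i + \sum_i\Big\lfloor\frac{p_i}{y}\Big\rfloor - s - \Big\lfloor\frac{s}{y}\Big\rfloor = \sum_i\Big\lceil\frac{p_i}{y}\Big\rceil - \Big\lceil\frac{s}{y}\Big\rceil = 0.
$$

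Your second and third paragraphs are therefore unnecessary, and in places incorrect.  The paper explicitly notes immediately after this lemma that these branched covers are \emph{not} transversally cut out, so your appeal to automatic regularity is misplaced.  Moreover, obstruction bundle gluing in \cite{HT} is not a device for perturbing the branched cover inside the symplectization to an ``honest'' curve; rather, the branched cover serves as the neck piece in a building, and \cite{HT} counts gluings of somewhere injective curves $u_i$ above and $u$ below across this neck to produce a curve in the glued cobordism.  That is the content of Proposition~\ref{prop:HT}, and the positivity of the gluing coefficient you flag as the ``main technical hurdle'' is exactly what is addressed there, not here.  In short: drop everything after the index calculation and the observation that the branched cover exists and is $J_0$-holomorphic.
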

\begin{proof}  There is an $s$-fold branched cover $C_{neck}$ of the constant cylinder $\be_1\times \R$ with ends of the appropriate multiplicities.  It has (half) Fredholm index
$$
\tfrac 12\ind(C_{neck}) = -1 + r + \sum _i p_i + \sum_i \Big\lfloor \frac{p_i}y\Big\rfloor  - s - \Big\lfloor \frac sy\Big\rfloor
= \sum_i \Big\lceil \frac{p_i}y\Big\rceil -   \Big\lceil \frac sy \Big\rceil = 0,
$$
since $ \lceil \frac sy \rceil = 1 + \lfloor \frac sy\rfloor$.
\end{proof}

 Although the above trajectories have index $0$, the moduli space of these trajectories is in general NOT cut out transversally. So they are not \lq\lq regular", i.e. they belong to a moduli space $\Mm$ of too high a dimension because there are too many branched $s$-fold coverings  $S^2\to S^2$ that satisfy the relevant branching conditions. On the other hand, the cokernels of the linearized Cauchy--Riemann operator at the points of $\Mm$ have constant dimension  and hence form a bundle over $\Mm$, called the obstruction bundle because it obstructs deformations of the elements in $\Mm$.
In the process of gluing, one first constructs a \lq\lq preglued" curve which is approximately holomorphic and then uses a Newton process to obtain a holomorphic curve.   In the regular case there is typically a one-to-one correspondence between the preglued objects and the resulting holomorphic objects.
In \lq\lq obstruction bundle gluing"  one shows that the number of  holomorphic curves obtained by the Newton process may be calculated from properties of the obstruction bundle. (In the simplest case it will just be its Euler class.)

The following result is a cornerstone of ECH.  
\MS

\begin{prop}\label{prop:HT}  Suppose given numbers $(p_i), s$ as in Lemma~\ref{le:2}, and let $\be_1$ be the short orbit on $\p E(1,y)$.  Let $\ov V, \ov Y$ be complete symplectic manifolds such that $\ov V$ has negative
 end $\p E(1,y)$ while $\ov Y$ has positive end $\p E(1,y)$; denote by $\ov X$ the manifold obtained by attaching $\ov V$ to $\ov Y$ along their common end.    Suppose that 
\begin{itemize}\item[-] 
 $u_i$ and $u $ are  somewhere injective index $0$ curves,  where $u_i$ in $\ov V$ has bottom end on $\be_1^{p_i}$   and $u$ in $\ov Y$ has top end on $\be_1^s$.
 \end{itemize}   
 Then there is a positive number $c_\ga$ of ways to glue the trajectory $(\bigsqcup u_i)$ to $u$ to form an index $0$  curve $v$ in $\ov X$.  
\end{prop}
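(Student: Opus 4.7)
The plan is to apply the obstruction bundle gluing machinery of Hutchings--Taubes \cite{HT} almost verbatim, specialized to the present geometric situation. First, I would construct a preglued almost-holomorphic curve $v_{\rm pre}$ in $\ov X$ by cutting $u$ off at some large negative height, cutting each $u_i$ off at some large positive height, and splicing in between them a long piece of the index-$0$ branched cover $C_{\rm neck}$ produced by Lemma~\ref{le:2}. The matching of asymptotic data is exactly what is required: $C_{\rm neck}$ has $r$ positive ends on $\be_1^{p_i}$ matched to the bottom ends of the $u_i$ and one negative end on $\be_1^s$ matched to the top end of $u$, and the arithmetic condition $\sum_i\lceil p_i/y\rceil=\lceil s/y\rceil$ is precisely what makes the resulting curve have virtual Fredholm index $0+0+0=0$ in $\ov X$.

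Next, the difficulty is that $C_{\rm neck}$ is not regular: the moduli space $\Mm_{\rm br}(p_1,\dots,p_r;s)$ of branched covers of the trivial cylinder $\be_1\times\R$ with these asymptotics has dimension strictly larger than its Fredholm index, because the linearized Cauchy--Riemann operator has a cokernel of locally constant positive rank. These cokernels assemble into an obstruction bundle $\mathcal{O}\to\Mm_{\rm br}$. For each sufficiently large neck length $T$ the obstruction to correcting $v_{\rm pre}$ to an honest holomorphic curve by a standard Newton iteration is captured by a canonical section $\sigma_T$ of $\mathcal{O}$; after quotienting by the $\R$-translation on the neck and using that $u$ and the $u_i$ are somewhere injective and index $0$ (hence regular by automatic regularity for genus zero curves in a $4$-dimensional target \cite{W}), each transverse zero of $\sigma_T$ yields, via a contraction-mapping argument in a weighted Sobolev norm on $v_{\rm pre}$, exactly one index-$0$ holomorphic curve $v$ in $\ov X$. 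The gluing count $c_\ga$ is then the signed count of zeros of $\sigma_T$, equivalently an Euler number of $\mathcal{O}$ with prescribed behaviour along the ends of $\Mm_{\rm br}$.

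The main obstacle, and the content I would need to import from \cite{HT}, is identifying $\sigma_T$ explicitly enough to show that $c_\ga>0$. The key computation expresses $\sigma_T$ to leading order in $e^{-\la T}$, where $\la$ ranges over asymptotic eigenvalues of the linearized Reeb flow along $\be_1$, as a pairing between asymptotic eigenfunctions coming from $u,u_i$ and the cokernel elements at each branch point. Because $\be_1$ is the short simple elliptic orbit on $\p E(1,y)$ with irrational rotation angle, the relevant eigenvalues all have one sign, no cancellations occur, and the Euler number reduces to a positive combinatorial count (indexed by certain decorated trees associated to the partition $(p_i)$, as in \cite[\S1, \S7]{HT}) — yielding $c_\ga>0$. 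The hardest step to reproduce here would be verifying the positivity of this combinatorial count rather than just its nonvanishing; everything else (pregluing, obstruction bundle set-up, and the Newton iteration) is standard once the analytic framework of \cite{HT} is in place.
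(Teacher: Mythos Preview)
Your approach is essentially the same as the paper's: both defer the analytic work to Hutchings--Taubes \cite{HT}, pregluing the $u_i$ and $u$ across the branched cover $C_{\rm neck}$ and then invoking the obstruction-bundle count to conclude that the number $c_\ga$ of glued curves is positive. The paper's proof is terser --- it simply checks that $(u_+,u_-)=((u_i),u)$ forms a \emph{gluing pair} in the sense of \cite[Defn.~1.9]{HT}, notes that the gluing is local so the symplectization arguments of \cite{HT} transfer to the cobordism setting (with index~$0$ replacing index~$1$), and then reads off positivity of $c_\ga(S)$ from \cite[Defn.~1.22, formula~(1.14), Remark~1.21]{HT} with $\kappa_\theta(S)=1$ and $k'=\ell'=0$.

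Two points you should tighten. First, the gluing pair condition in \cite[Defn.~1.9]{HT} requires the $u_i$ to be \emph{distinct} curves; the paper observes that this follows from the strict inequalities $p_1>p_2>\cdots>p_r$ in the partition hypothesis of Lemma~\ref{le:2}, and you should make this explicit. Second, your justification for $c_\ga>0$ (``eigenvalues all have one sign, no cancellations'') is a correct heuristic for why the combinatorics work out, but as stated it is not a proof; the paper instead pins down the exact specialization of the Hutchings--Taubes formula and cites \cite[Remark~1.21]{HT} for the positivity. Either cite that remark directly or carry out the combinatorial count in full --- your sketch as written leaves the hardest step (which you yourself flag) unverified.
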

\begin{proof}  
This follows from \cite[Thm~1.13]{HT}.  In that paper the authors prove an analogous result about the  gluing of  index $1$ trajectories in the symplectization
$\p E(1,y)\times \R$ when it is equipped with an $\R$-invariant almost complex structure.  Because the glung operation is local,  i.e. it depends only on what is happening near the neck region, their arguments apply equally well in the current  situation; in particular, somewhere injective curves are embedded near the neck and we only consider admissible almost complex structures that, by definition, are  $\R$-invariant near the neck.  However, because there is no global $\R$-action, we consider index $0$ rather than index $1$ trajectories.  Notice that 
our hypothesis on the trajectories $u_i, u$ and numbers $p_i,s$ imply that  the pair $(u_+: = (u_i), u_-: = u)$ is a 
gluing pair in the sense of \cite[Defn.~1.9]{HT}.  It is important that the $u_i$ are distinct,  but this follows from the condition  $p_1>p_2>\dots > p_r$ on the partition.
  Therefore it remains to check that  the gluing coefficient (called $c_\ga(S)$ in \cite{HT}) is positive.  
in  the current situation, we may directly apply  \cite[Defn.~1.22]{HT} (since $\ka_\theta(S) = 1$ and $k'=\ell'=0$) so that $c_\ga(S)$ is given by formula (1.14)  with $\ell=1$.  But in this case the positivity of $c_\ga(S)$ is asserted in 
 \cite[Remark~1.21]{HT}.
\end{proof}

\begin{example}\label{ex:257}\rm  Consider the decomposition $(p_1,p_2)=(2,5)$ of $s=7$.   With $5< y < 7$ we have
$\sum_i \lceil \frac{p_i}y\rceil =  1 + 1 = \lceil \frac sy \rceil$, so that two curves with bottom end multiplicities $2,5$ may be glued to one with top  end of multiplicity $7$.  In this case, the recipe in \cite[(1.14)]{HT} gives the gluing coefficient\footnote
{
The definition of $f_{1/y}(5,2\,|\,7)$ in \cite[Def~1.20]{HT} is too long to reproduce here; we mention it to make it easier for the interested reader to check 
what is needed from  \cite{HT}.
}
$$
c_\ga(S) = f_{1/y}(5,2\,|\,7) =  f_{1/y}(2|2)\ \de_{1/y}(5,7) =\de_{1/y}(2,2) \ \de_{1/y}(5,7) = 2\cdot 2 = 4,
$$
where $\de_\theta (a,b) = b\lceil a\theta\rceil - a\lfloor b\theta\rfloor$ so that $\de_{1/y}(2,2) = 2\lceil \frac 2y\rceil - 2\lfloor \frac 2y\rfloor = 2$.
\hfill$\er$
\end{example}

\begin{rmk}\label{rmk:trivcyl}\rm  The gluing theorem \cite[Thm~1.13]{HT}  also  applies if the $u_i$ are somewhere injective and distinct, but  $u$ is a trivial cylinder in a symplectization.\footnote{
By definition, a {\it trivial cylinder}  in a symplectization $\Si\times \R$ is
an unbranched  multiple cover of the  cylinder $\be\times \R$ over an asymptotic Reeb orbit $\be$ in $\Si$.
One might be able to weaken this requirement, but to do this would require detailed analysis of the gluing argument.  The point is that in this case the argument is not local to the top end of $u$, but because of its invariance under the $\R$-action,  involves a neighborhood of the whole curve.}    
In this case condition (d) in \cite[Defn.~1.9]{HT} requires that the partition $(s)$ be maximal in the partial order defined in 
\cite[Defn.~1.8]{HT}.   The calculations in \cite[Appendix~A]{Hlect} relevant to \cite[Exercises~3.13,3.14]{Hlect} show that the  negative ECH partition\footnote
 {
 Here  $p^-_{\theta}(k)$ (resp. $p^+_{\theta}(k)$) denotes the ECH partition at the negative (resp. positive) end of a trajectory with total multiplicity $k$ around an orbit with monodromy angle $\theta$.} 
$p_{1/y}^-(s)$ is always $\ge (s)$.  Hence 
 this is equivalent to  requiring that 
 the negative ECH partition    $p_{1/y}^-(s)$
at $\be_1^s(1/y)$  has the single term $(s)$.\footnote
 {
 In the published version of this paper the condition in \cite{HT} was misinterpreted 
 as the requirement that $ p_{1/y}^+(s) = (s)$, a condition that  is sometimes satisfied when the partition $(p_i)$ is nontrivial.  This led to the assertion that certain moduli spaces $\Mm(m, x, t)$ are nonempty in cases when one can show that in fact  they have to be empty.  We have corrected the examples discussed in the remarks below to take this into account: see Example~\ref{ex:1} and
Remark~\ref{rmk:glu}. }
  But it is easy to check that this condition is never satisfied if 
there also is a partition  $p=(p_i)$ that satisfies  $\sum_i \lceil \frac{p_i}y\rceil =   \lceil \frac sy \rceil$ and has more than one entry, since the corresponding piecewise linear path lies between the lines of slope $1/y$ and $1/s$. (For relevant background see \cite
{Hlect} or \cite[\S2]{CGHM}.)
\hfill$\er$
\end{rmk}

\MS

\NI {\bf Cylinders}

Given irrational numbers $y \le  x$, let 
$\ov Y$ be the cobordism from $\p E(1,y)$ (the positive end) to $\p E(1,x)$  (the negative end) obtained by completing the region
$E(1,y) \less E(\la,\la x)$ at its positive and negative ends, where $\la>0$ is sufficiently small\footnote
{
The precise value of $\la$ is irrelevant, since all the curves we construct here persist under deformations.}
 that $E(\la,\la x)\subset E(1,y)$.
Let 
$\Mm_{cyl}\bigl(\be_1^s(y), \be_1^t(x)\bigr)$ be the moduli space of $J$-holomorphic cylinders in $\ov Y$ with positive end on $\be_1^s(y)$ and negative end on $\be_1^t(x)$, where $s,t\in \N$.  We  assume that these cylinders have  index zero, i.e. that
\begin{align}\label{eqn:ind} 
 \tfrac 12\ind_{cyl}(\be_1^s(y)) : = s + \lfloor \frac sy\rfloor =  \tfrac 12 \ind_{cyl}(\be_1^t(x)) : = t + \lfloor \frac tx\rfloor.
\end{align}
Since $y<x$ this condition implies that $s\le t$. 

Our general method is to try to construct curves with a single negative end in $\ov X_x$  by gluing several curves in $\ov X_y$ with end multiplicities $p_1,\dots, p_n$ where $\sum p_i = s$ to an index zero cylinder in $\Mm_{cyl}\bigl(\be_1^s(y), \be_1^t(x)\bigr)$.
 Note that if $s=t$ then there is such a cylinder, namely the $s$-fold cover of the cylinder
 from $\be_1(y)$ to $\be_1(x)$ that one can show must always exist.  However, this cylinder is obviously not somewhere injective and hence, as we saw in Remark~\ref{rmk:trivcyl},   Proposition~\ref{prop:HT} does not apply.
On the other hand if $\frac t2< s<t$  then (if it exists) the cylinder is somewhere injective since the multiplicities of its ends are mutually prime, and hence, since we assume it has index zero, must be regular.
 Hence 
the standard gluing argument shows that   
  it can be glued to any regular  index zero curve at its top or bottom end.   
 \MS

Because we are dealing with cylinders, there are several relevant  homology theories, such as 
cylindrical contact homology and equivariant symplectic homology.  In such theories, ellipsoids have one generator in each even degree, so that  the cobordism maps in these theories ideally would assert that the desired cylindrical trajectories exist.   
 However, because of transversality issues, the situation is very delicate. Equivariant symplectic homology (cf. Gutt~\cite{Gu} or Ekholm--Oancea~\cite{EO}) concerns cylindrical trajectories whose form is a little different  from those considered here since they satisfy an equation perturbed by a Hamiltonian function, while Pardon~\cite{P} has developed a very general version of contact homology that can be specialized to the case of interest here.
A more elementary
 version of cylindrical contact homology is still under construction (see Hutchings--Nelson~\cite{HN}).  Note that,  even though the basic chain complex in cylindrical contact homology can in certain cases be defined using  $J$-holomorphic curves of the kind considered above, when constructing cobordisms  one must perturb the equation in some way.  For example, \cite{HN} uses domain dependent almost complex structures, i.e.  $J$ that depend on the $S^1$ coordinate in the domain $S^1\times \R$ of the cylinder.  
As such $J$ converges to one that is domain independent, the compactness theorem of \cite{BEHWZ} asserts that the cylinder 
converges to a $J$-holomorphic building from $\be_1^s(y)$ to $ \be_1^t(x)$.  For example, it could be a series of cylinders stacked end to end; see Example~\ref{ex:cyl}  below  in which we give examples where $\Mm_{cyl}\bigl(\be_1^s(y), \be_1^t(x)\bigr)$ must be empty even though the index condition is satisfied.   On the other hand, one may conclude from these general theories that there always is a building with top end 
$\be_1^s(y)$ and bottom end $\be_1^t(x)$.      In the absence of a readily quotable proof of this result, Hind--Kerman~\cite{HK3} give an independent more direct argument.

Here we shall assume the above result, and simply sketch the proof of the following lemma, which is a version of a result in 
 \cite{HK2,HK3}. Note that by Example~\ref{ex:1} one cannot always a find a cylinder with end multiplicities $t-1,t$ even if the index condition holds; some  conditions  on $y,x$ are  necessary.

\begin{lemma}\label{le:1}  Suppose that $\ind_{cyl}\bigl(\be_1^{t-1}(y)\bigr) = \ind_{cyl}\bigl(\be_1^t(x)\bigr)$, where $ y<t < x$. 
Then $\Mm_{cyl}\bigl(\be_1^{t-1}(y), \be_1^t(x)\bigr)$ contains a regular, somewhere injective $J$-holomorphic cylinder of index $0$.  In particular,
there is such a cylinder in $$\Mm_{cyl}\bigl(\be_1^{3m-2}(y), \be_1^{3m-1}(x)\bigr)$$
 if $y<3m-2$ and $ x>3m-1$. 
\end{lemma}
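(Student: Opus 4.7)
The plan is to deduce the lemma from the general existence result for holomorphic buildings cited just before the statement, and then to refine that building to a single regular cylinder. First I would invoke that existence result---via equivariant symplectic homology \cite{Gu, EO}, Pardon's contact homology \cite{P}, or the direct construction of Hind--Kerman \cite{HK3}---so that the index matching hypothesis produces a genus-zero $J$-holomorphic building $\mathcal{B}$ of total Fredholm index $0$ with top end $\be_1^{t-1}(y)$ and bottom end $\be_1^t(x)$. A priori $\mathcal{B}$ may be a multi-level building, with pieces in the cobordism $\ov Y$ and symplectization levels at either end joined along iterates $\be_1^k(y)$ in $\p E(1,y)\times\R$ or $\be_1^\ell(x)$ in $\p E(1,x)\times\R$ at intermediate nodes.

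The main work, and the main obstacle, is showing that $\mathcal{B}$ does not in fact break. Since the total index is $0$ and each level of the limiting building carries nonnegative index, every level must have index exactly $0$. Action considerations (using $y<t<x$) rule out intermediate orbits lying on the long orbit $\be_2$ on either side, as they would force the area of some level to be negative. An index-$0$ level in a symplectization whose ends are iterates of the same simple orbit $\be_1$ must be a branched cover of a trivial cylinder, and such pieces can be absorbed into the cobordism level without altering the outer asymptotics. After this reduction $\mathcal{B}$ is a single cylindrical curve in $\ov Y$ from $\be_1^{t-1}(y)$ to $\be_1^t(x)$.

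Because $\gcd(t-1,t)=1$, the multiplicities at the two ends of this cylinder are coprime, so it cannot factor nontrivially through a multiple cover and is therefore somewhere injective. Automatic regularity for genus-zero somewhere injective index-$0$ curves in a $4$-dimensional target \cite{W} then shows that the cylinder is regular, yielding the claim. For the ``in particular'' clause with $s=3m-2$ and $t=3m-1$, the index identity \eqref{eqn:ind} reduces to $\lfloor (3m-2)/y\rfloor = 1$ (since $\lfloor (3m-1)/x\rfloor = 0$ for $x>3m-1$), which holds in the relevant range of $y$, and $\gcd(3m-2,3m-1)=1$ is automatic, so the main statement applies.
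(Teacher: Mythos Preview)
Your outline has the right shape---start from a building and argue it cannot break---but the two steps that carry all the weight are not justified, and in fact one of them is false as stated.

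\textbf{The nonnegativity claim.} You assert that ``each level of the limiting building carries nonnegative index'' and deduce that every level has index exactly $0$. Nonnegativity holds for components in the symplectization levels (this is \cite[Lemma~3.12]{HK}), but it is \emph{not} automatic for the cobordism level: a multiply covered component in $\ov Y$ can have negative Fredholm index even for generic $J$. This is precisely the scenario the paper's proof works to exclude. The paper argues that if the building has more than one level then some component (a plane, or a nontrivial symplectization cylinder) has strictly positive index, forcing a cobordism component $C'=nC$, $n>1$, of negative index. It then uses the numerical hypotheses to rule out the underlying simple curve $C$: from the index equality one gets $\lfloor (t-1)/y\rfloor=1$, hence $y>(t-1)/2$; combined with index bounds on the ends of $C$ this forces $y<p\le (t-1)/n$ for the positive-end multiplicity $p$, giving $n=1$, a contradiction. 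You have no analogue of this step, so the possibility of a multiply covered negative-index cobordism piece is simply unaddressed.

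\textbf{The action argument.} Your claim that action considerations rule out intermediate asymptotics on $\be_2$ does not work. In the bottom symplectization a cylinder from $\be_2(x)$ to $\be_1^t(x)$ has action $x-t>0$ since $x>t$; in the top symplectization a cylinder from $\be_1^{t-1}(y)$ to $\be_2(y)$ has action $(t-1)-y>0$ once $y<t-1$. So action alone does not exclude $\be_2$. The paper instead compares \emph{indices}: for example $\tfrac12\ind(\be_2(x))=1+\lfloor x\rfloor>t=\tfrac12\ind(\be_1^t(x))$ shows the underlying curve $C$ cannot have a negative end on $\be_2(x)$, and the bound $y>(t-1)/2$ similarly rules out $\be_2^2(y)$ at the positive end.

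The remaining parts of your sketch (coprimality of $t-1,t$ giving somewhere injectivity, automatic regularity from \cite{W}, and the verification of the index identity in the ``in particular'' case) are fine.
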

\begin{proof}[Sketch of proof]  Suppose that for some generic $J$ there is a cylindrical building from $\be_1^{t-1}(y)$ to $\be_1^{t}(x)$ with at least two levels.  
The building has one top end, one bottom end and genus zero, 
and one can choose the almost complex structure on the cobordism part so that the maximum principle  implies that every component has at least one positive end.  Hence, because the whole curve has genus $0$, 
each connected component in any level of the building must have a single positive end.  

We claim the building must contain a component of positive index. To see this, note first that
 if any component has more than one negative end, 
the fact that the total curve has genus zero means  that all but one of these  components must be capped off by a union of components that together form a plane, i.e. with just one (necessarily positive) end.  One can check that these planes have positive index.    
Therefore, it suffices to consider the case when each component has just one negative end, and so is a cylinder. 
But, as in  \cite[Lemma~3.12]{HK}, Stokes' theorem implies that every cylinder in the symplectization $\p E(1,a)\times \R$ of an ellipsoid has nonnegative index, and   the index is zero only if the cylinder is  trivial.
  Therefore, because by definition no symplectization level of the building can consist only of 
trivial cylinders,
 the building must contain a component of positive index.  
 
 Hence it 
 also contains a component $C'$ of negative index, which must lie in   the cobordism level.
 (Every building has just one cobordism level, while it might have several levels in the top and bottom symplectizations.) 
Since for generic $J$ all somewhere injective curves have nonnegative index, 
the component  $C'$ must be 
the $n$-fold cover   $nC$
of a  curve $C$ with nonnegative index where $n>1$.   We now show that our numerical hypotheses imply that such a curve $C$ cannot exist.

Since the components of the building in the symplectization levels have nonnegative index,
 the top end of $nC$ has index at most that of $\be_1^{t-1}(y)$ while its bottom  has index at least that of $\be_1^{t}(x)$.
In particular,
because 
 $$
\tfrac 12 \ind (\be_2(x)) = 1 + \lfloor x\rfloor\ge 1+t> t=\tfrac12 \ind(\be_1^t(x))  = \tfrac12 \ind(\be_1^{t-1}(y)),
$$
$C$ cannot have a negative end on any multiple of $\be_2(x)$.  Hence its negative ends cover $\be_1(x)$ with some multiplicity $q$, where 
$q< t< x$.  Thus  the $\frac 12$-index of these ends  is $q$, no matter what the partition is.

Further the fact that $\ind(\be_1^t(x))  =  \ind(\be_1^{t-1}(y))$ implies that
$\lfloor \frac {t-1}y \rfloor = 1$, so that $y> \frac{t-1}2$.  In particular,  $$
\tfrac 12\ind(\be_2^{2}(y)) = 2 + \lfloor 2y \rfloor \ge t+1> 
\tfrac 12 \ind(\be_1^{t-1}(y)).
$$
Therefore,
the positive end of $C$  lies on $\be_1^p(y)$ where   $np\le t-1< t\le nq$.  Hence $p<q$, so that    
to obtain  $\ind(C) \ge 0$ we need  $\lfloor \frac py \rfloor \ge 1$.   Therefore we  must have
$y<p \le \frac{t-1}n$.  But we saw above that $y> \frac{t-1}2$.  Hence $n=1$, contrary to hypothesis.

Thus proves the first claim.  The second follows immediately because the conditions on $x,y$ imply that
$\tfrac 12 \ind(\be_1^{3m-2}(y)) = 3m-1 = \tfrac 12 \ind(\be_1^{3m-1}(x))$.
\end{proof}

\NI {\bf The Construction}

We are now in a position to construct the desired curves.   

\begin{lemma}\label{le:3k-1}  For all $m\ge 1$ and $x> 3m-1$ there is a  genus $0$ trajectory $C_m(x)$ with one negative end in  
$\Mm\bigl(m, x, 3m-1\bigr)$.
\end{lemma}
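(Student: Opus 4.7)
The plan is to prove Lemma~\ref{le:3k-1} by induction on $m$, combining the obstruction bundle gluing of Proposition~\ref{prop:HT} with the somewhere injective cylinder provided by Lemma~\ref{le:1}. The base cases $m=1,2$ --- namely that $\Mm(1,x,2)$ and $\Mm(2,x,5)$ are nonempty for all $x>2$ and $x>5$ respectively --- are known from the work underlying the calculations of $c_k(2)$ and $c_k(5)$ in~\eqref{eq:lesstau}, so I would invoke them as given.

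For the inductive step with $m\ge 3$ and $x>3m-1$, I would fix an irrational $y$ with $3m-4<y<3m-2$. By the inductive hypothesis there is a curve $u_2\in\Mm(m-1,y,3m-4)$; by the base case there is $u_1\in\Mm(1,y,2)$; and by Lemma~\ref{le:1} (applied with $t=3m-1$) there is an index-$0$, somewhere injective cylinder $u\in\Mm_{cyl}\bigl(\be_1^{3m-2}(y),\be_1^{3m-1}(x)\bigr)$ in the cobordism $\ov Y$. I would then apply Proposition~\ref{prop:HT} with $\ov V=\ov X_y$, the cobordism $\ov Y$ above, and the partition $(p_1,p_2)=(3m-4,2)$ of $s=3m-2$. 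This partition is strictly decreasing since $m\ge 3$; the index matching $\lceil(3m-4)/y\rceil+\lceil 2/y\rceil=2=\lceil(3m-2)/y\rceil$ of Lemma~\ref{le:2} is immediate because $y<3m-2<2y$; and $u_1,u_2$ are somewhere injective and distinct, since their degrees differ and the identity $\gcd(m-1,3m-4)=\gcd(m-1,-1)=1$ rules out $u_2$ being a nontrivial multiple cover. Proposition~\ref{prop:HT} then produces a positive number $c_\ga$ of index-$0$ curves $v$ in $\ov V\cup \ov Y=\ov X_x$ of degree $1+(m-1)=m$ with a single negative end on $\be_1^{3m-1}(x)$, giving $v\in\Mm(m,x,3m-1)$ as required.

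The hardest step I expect is the careful bookkeeping to ensure that all hypotheses of Proposition~\ref{prop:HT} apply to this specific gluing: verifying that the inductively constructed $u_2$ is genuinely somewhere injective rather than a multiple cover, that the cylinder $u$ is somewhere injective rather than a trivial cover (so that Remark~\ref{rmk:trivcyl} does not intervene), and that $(u_+,u_-)=((u_1,u_2),u)$ forms a bona fide gluing pair in the sense of \cite[Defn~1.9]{HT}. A secondary concern is that the partition $(3m-4,2)$ degenerates when $m=2$ (where the two parts coincide), which is precisely why the induction must start at $m\ge 3$ and the base cases $m=1,2$ be imported from the literature in a form directly usable by the gluing machinery.
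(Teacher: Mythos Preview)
Your proposal is correct and follows essentially the same argument as the paper: induction on $m$ with base cases $m=1,2$ imported from \cite{CGH}, and the inductive step obtained by gluing the previously constructed curve together with a degree-$1$ curve to the somewhere injective cylinder of Lemma~\ref{le:1} via Proposition~\ref{prop:HT}. The only difference is cosmetic --- you index the step as $m-1\mapsto m$ while the paper writes it as $m\mapsto m+1$ --- and your explicit checks (that $\gcd(m-1,3m-4)=1$ forces $u_2$ to be simple, that the strictly decreasing partition condition fails exactly at $m=2$, and that the ceiling identity of Lemma~\ref{le:2} holds on $(3m-4,3m-2)$) fill in details the paper leaves implicit.
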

\begin{proof}  We prove this by induction on $m$.   The cases $m=1,2$ are proved in \cite{CGH}, which constructs the curve as an ECH (embedded contact homology) trajectory.  Alternatively, one
could blow up inside the ellipsoid in $\C P^2(\mu)$ and then look to see what happens to  the exceptional sphere in class\footnote
{
Here we denote the line class by $L$, and write $E_{1\dots k}$ as shorthand for the sum $\sum_{i=1}^k E_i$ where $E_i$ is the class of the $i$th exceptional divisor.}
  $L-E_{12}$ (when $m=1$) or $2L-E_{1\dots 5}$  (when $m=2$) when the  neck around  the boundary of the ellipsoid $\p E(1,x)$ is stretched.  In the limit the sphere breaks into at least two pieces, the top piece (called $C_U$) that lies in the completion $\ov X_x$ of $\C P^2(\mu)\less \Phi(\p E(1,x))$, the bottom piece that lies in the positively completed blown up ellipsoid, and possibly some other components lying in the neck region $\p E(1,x)\times \R$.
It turns out that the top of the resulting curve has the right structure.    (For details of this approach, see  \cite[Remark~3.1.8~(ii)]{CGHM}.)
This argument does not work for $m\ge 3$ because, as explained in  \cite[Remark~3.5.4~(i)]{CGHM} although there is a suitable exceptional class, when the neck is stretched the resulting curve will have more than one negative end.

Suppose now that for some $m\ge 2$ and $y\in (3m-1,3m+2)$ the curve $C_m(y)$ has been constructed, and consider its union with the trajectory $C_1\in 
\Mm\bigl(1, y,2)\bigr)$.  Then the partition $(3m-1,2)$ of $s = 3m+1$ satisfies the conditions in Lemma~\ref{le:2}
for $y \in (3m-1, 3m+1)$, so that there is a regular genus $0$ curve $C_{neck}(y)$ with two positive ends of multiplicities  
$(3k-1,2)$ and one negative end.  
  Further 
  \begin{align*}
&\tfrac 12 \ind_{cyl}\bigl(\be_1^{3m+1}(y)\bigr) = 3m+2  = \tfrac 12 \ind_{cyl}\bigl(\be_1^{3m+2}(x)\bigr), \\
& \qquad \qquad \qquad  \mbox{ if } \;3m-1< y< 3m+1, \;\mbox{ and }\;  x> 3m+2.
\end{align*}
Hence by Lemma \ref{le:1} there is a cylinder from $(y, 3m+1)$ to $(x, 3m+2)$, 
which has to be somewhere injective since the end multiplicities are relatively prime. 
Hence we may obtain the desired trajectory in $\oMm\bigl(m+1,\, x, \, 3m+2)\bigr)$ by gluing these three components at the neck  using Proposition~\ref{prop:HT}.
\end{proof}

\begin{proof} [Proof of Theorem~\ref{thm:1}]
This follows by combining the statement of Lemma~\ref{le:3k-1} for $x = 3m-1 + \eps$ with Corollary~\ref{cor:1}.
\end{proof}

\begin{example}\label{ex:cyl}\rm  Since the curves in $\Mm(1,x,2)$ and $\Mm(2,x,5)$ are embedded and turn out to be detected using ECH, one can show that there is exactly one element in each of these moduli spaces.  Therefore, if we also assume that there is one cylinder  
from $\be_1^7(7-\eps)$ to $\be_1^8(8+\eps)$, the calculation  in Example~\ref{ex:257} shows that our method constructs $4$ elements in $\Mm(3,8+\eps, 8)$.   
In \cite[\S3.5]{CGHM} we used a different method to find elements in this moduli space, and also gave plausibility arguments that there should be exactly $4$ such elements.   Thus, by further analysis of gluing operations, one might be able to show that $\Mm(3,8+\eps, 8)$  does have exactly $4$ elements. Further, it is not hard to check that at each stage  the relevant gluing coefficient is $4$.  Hence
our method  seems to construct  $4^{m-2}$ distinct elements in 
$\Mm(m,3m-1+\eps, 3m-1)$ for each $m\ge 3$.\hfill$\er$
 \end{example}
 
It does not seem easy to extend   the above argument to other values of $x$, mostly because it is hard to find suitable cylinders.  
 We end by discussing some cautionary  examples.

  \begin{figure}[htbp] 
     \centering
     \includegraphics[width=2.5in]{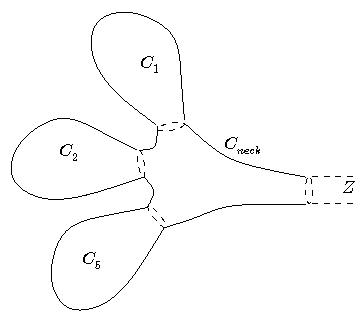} 
     \caption{Gluing three curves to a cylinder. }
     \label{fig:1}
  \end{figure}
  
\begin{example}\label{ex:1}\rm (i)  In order to calculate $c_k(7)$ by our method,
we must find elements in $\Mm(d,7+\eps,t)$ for suitable $d,t$ and small $\eps>0$.  
The index condition  $0 =  3d-1 - t- \lfloor \frac t{7+\eps}\rfloor$ has  no solution with $t=7$.
  However, by Remark~\ref{rmk:stab}, any element
 in $\Mm(8,7+\eps, 21)$ also stabilizes.  
   Hence it would suffice to show that this moduli space is nonempty.
   
One might try to construct an element in $\Mm(8,7+\eps, 21)$ by gluing three curves $C_1,C_2, C_5$ (where $C_d$ has degree $d$) 
   at $y= \frac{13}2 + \eps$ to a cylinder $$
   Z\in \Mm_{cyl}\bigl(\be_1^{20}(y), \be_1^{21}(7+\eps)\bigr)
   $$
    as in Fig.~\ref{fig:1}.
   We have $C_1 \in \Mm(1,y,2)$ and $C_2\in \Mm(2,y,5)$  by Lemma~\ref{le:3k-1},  and also
 $\Mm(5, y, 13)\ne \emptyset$ since $2,5,13$ are odd  index Fibonacci numbers (for a proof, see \cite{CGH}).   Further, the partition $(2,5,13)$ of  $20$ satisfies the conditions of Lemma~\ref{le:2} at $y = 
 \frac{13}2 + \eps$.   
 Hence if there were a cylinder $Z\in \Mm_{cyl}\bigl(\be_1^{20}(y), \be_1^{21}(7+\eps)\bigr)$, we could glue this cylinder to 
 $C_1,C_2,C_5$   to obtain an element in $\Mm(8,7+\eps, 21)$.  
But  there is a multilevel building from $\be_1^{20}(y)$  to $\be_1^{21}(7+\eps)$ that consists of a cylinder from  
$\be_1^{20}(y)$ to $\be_2^{3}(y)$, together with  the $3$-fold cover of 
a cylinder from $\be_2(y)$ to $\be_1^7(7+\eps)$.  (Note that both these ends have $\frac 12$-index equal to $7$.)
Hence there is no reason that there should also be a cylinder $Z$ with these ends, and
in fact one can use ideas from ECH to show that such a cylinder $Z$ cannot exist, as follows.

Such a cylinder $Z$ would have to be somewhere injective, 
and hence have nonnegative ECH index $I(Z)$.\footnote{
See \cite{Hlect} or \cite[\S2]{CGHM} for relevant background on ECH.}
  Moreover, if it had zero ECH index, it would have to have ECH partitions at its ends.
But its ECH index is $\gr\bigl(\be_1^{20}(y)\bigr) - \gr\bigl(\be_1^{21}(x)\bigr)$,
where $1+ \frac 12 \gr\bigl(\be_1^{p}(z)\bigr)$ is the number of integer points in the triangle with vertices $(0,0)$, $(p,0)$ and $(0, \frac pz)$.\footnote
{
This somewhat surprising interpretation of the ECH index holds because the latter is a sum of Conley--Zehnder Fredholm-type indices that as above have the form $\lfloor zk \rfloor$ and hence are related to numbers of integer points: see \cite{Hlect}.}  
An easy calculation shows that 
 $$
\tfrac 12 \gr\bigl(\be_1^{20}(y)\bigr) = \tfrac 12\gr\bigl(\be_1^{21}(x)\bigr) = 42.
$$
Thus the ECH index $I(Z)$ of $Z$ would be  zero.  Now the ECH
 partition
  $p^-_{1/(7+\eps)}(21)$ at the negative end 
 is given by the $x_1$-lengths of the segments of the minimal convex lattice path (i.e. with integral break points) that lies above the line $x_2 = \frac 1{7+\eps} x_1$.  Hence because $\frac{21}{7+\eps}$ is just less than $3$, this partition is $p^-_{1/(7+\eps)}(21)=(7^{\times 3})$.  Thus $Z$ does not have ECH partitions, and so cannot exist.
 
 In the published version of this paper, an alternative discussion of why the cylinder $Z$  cannot exist was based on the assumption that the three curves $C_1,C_2, C_5$ can be glued at $y$ to give an element in $\Mm(8,y, 20)$.  As explained in Remark~\ref{rmk:trivcyl} above, this gluing operation does not exist.  Further one can use  Hutchings' adjunction formula
 (see \cite{Hlect}, or  \cite[eq~(2.2.10),~Remark~2.2.3]{CGHM})
 to show that $\Mm(8,y, 20)$ is empty.   Indeed this formula shows that the union $C = C_1\cup C_2\cup C_5$ has one double point, occurring as the intersection of $C_1$ with $C_5$.  (As in \cite[Remark~3.1.8~(ii)]{CGHM}, we may obtain the $C_i$ from curves in classes $L-E_{12},\; 2L-E_{1\dots5}, \;5L-2E_{1\dots 6}-E_{78}$ by stretching the neck; note that  $(L-E_{12})\cdot (5L-2E_{1\dots 6}-E_{78}) = 1$.)
  However, replacing the partition $(2,5,13)$ given by $C$ with the partition $(20)$ increases the writhe at the negative end, giving a negative value to the number of double points predicted by the adjunction formula.  
\MS

\NI (ii)  Now let us consider the point $x = \frac {76}{11} + \eps$.\footnote{
Despite appearances, this example is not completely random. Indeed, $ \frac {76}{11}  = 7-\frac 1{11}$ bears the same relation to $11$ as the Fibonacci quotient $\frac {55}8$ does to $8$: i.e.  in the notation of \cite[Lemma~4.1.2]{MS} we have $\frac {76}{11}= v_1(10)$ 
while $\frac {55}{8}= v_1(7)$.  Thus $\frac {76}{11}$ is the first in the next set of generalized ghost stairs.
}
It is not hard to check that elements in $\Mm(29, x, 76)$ do stabilize.   Further, the adjunction formula does not rule out the existence of a curve here, and one might hope to 
 construct an element of $\Mm(29, x, 76)$ by gluing
  suitable curves in $\ov X_{y}$ for some $y<x$  to a cylinder $Z$ from $\be_1^{75}(y)$ to $\be_1^{76}(x)$.  For example, if $y = \frac{34}5+\eps$, then there are elements in $C_{13}\in \Mm(13,y, 34)$,
$C_2\in \Mm(2,y, 5)$ and $C_1\in \Mm(1,y, 2)$ and if one could  glue two copies of
$C_{13}$ to $ C_2$ and $C_1$
then one would get a curve of  the right degree, namely $29$.
However, even if $Z$ existed, 
%
%
Proposition~\ref{prop:HT} does not permit such a gluing  since that requires
the curves $u_i$ to be distinct, and the adjunction formula again shows that one cannot hope to glue in stages,
first constructing an element in $\Mm(16, y, 41)$ by gluing one copy of each of $C_{13}, C_2$ and $C_1$ to a trivial cylinder, and then gluing in the second copy of $C_{13}$. 

Another difficulty with this approach is that there is no cylinder $Z$ with ends on $\be_1^{75}(y)$ and $\be_1^{76}(x)$.  This follows  by an easy ECH calculation.  Such a curve would have to be somewhere injective, so that we would have to have
$\gr\bigl(\be_1^{75}(y)\bigr) \ge \gr\bigl(\be_1^{76}(x)\bigr)$. 
But one can calculate that
$
\tfrac 12 \gr\bigl(\be_1^{75}(y)\bigr) = 456,\ \  \tfrac 12 \gr\bigl(\be_1^{76}(x)\bigr) = 461.
$
So $Z$ cannot exist.

Nevertheless, 
since the orbits $\be_1^{75}(y)$ and $\be_1^{76}(x)$ have the same Fredholm index we saw above that there is some cylindrical building between them.    There are many possible candidates.  For example,  note that both
 $\be_2(y)$ and $\be_2(x)$ 
 have index $7$ so that there could be a cylinder $Z_0$ of index $0$ between them, but
\begin{align*}
&\tfrac 12 \ind(\be_2^{11}(y)) = 11 + 74 < \tfrac 12 \ind(\be_1^{75}(y)) = 75 + 11, \mbox { while }\\
&\tfrac 12 \ind(\be_2^{11}(x)) = 11 + 76 > \tfrac 12 \ind(\be_1^{76}(x)) = 76 + 10.
\end{align*}
Hence there might be a building 
with three levels: first a cylinder in $\p E(1,y)\times \R$
 that goes from $ \be_1^{75}(y)$ to $\be_2^{11}(y)$ in $\p E(1,y)\times \R$, then across the cobordism via $11$ times $Z_0$,  and then from $\be_2^{11}(x)$ to $\be_1^{76}(x)$ in $\p E(1,x)\times \R$: see  
Figure~\ref{fig:2}. 
 \begin{figure}[htbp] 
   \centering
   \includegraphics[width=3in]{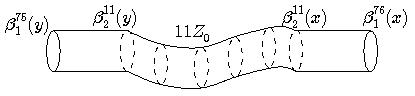} 
   \caption{A candidate multi-level building from $\be_1^{75}(y)$ to $\be_1^{76}(x)$}
   \label{fig:2}
\end{figure}
In order to glue such a building to the curves $C_1, C_2, C_{13}$ mentioned above we would need a significantly
stronger version of the Hutchings--Taubes gluing theorem.  
\hfill$\er$
\end{example}  

The next remark explains some cases in which the gluing coefficient has to vanish because otherwise the glued curve would violate some ECH identity such as the adjunction formula.  It is not known whether these numerical conditions are sufficiently strong to detect all such cases.

\begin{rmk}\label{rmk:glu}{\bf (More about gluing)\;} \rm  
(i)  Here is a very easy example in which a given building cannot be glued to a single curve. Let $y\in (5,7)$, and
take the curves $C_i \in \Mm(i, y, t_i)$ for $i=1,2$  in Example~\ref{ex:1} (i) (where  $t_1=2, t_2=5$) together with a 
genus zero curve in  the \lq neck' $\R\times \p E(1,y)$ with two positive ends of multiplicities $(2,5)$ and one negative end of multiplicity $7$.
If these could be glued at the parameter value $y$, we would get a curve in $\Mm(3,y,7)$ of index $0$.  But this cannot exist because if it did 
the adjunction formula would imply that it has a negative number of double points.  (As in Example~\ref{ex:1}, the curves $C_1,C_2$ are disjoint, and the writhe of a negative end of multiplicity $7$ is greater than than one with partition $(2,5)$ for any $y$ in this range.)
 If $5< y <6$, one can use an easier argument: because $\tfrac 12 \gr(\be_1^7(y)) = 9$ in this range, this glued curve would have ECH index $=0$ and hence
the ECH partition $p^-_{1/y}(7)$ at its bottom end. But in this range $p^-_{1/y}(7) = (5,2)$.
 
 \MS
 
\NI (ii) More generally, one might try to glue $C_1, C_2$ at $y\in (5,7)$ to a multiply covered cylinder from $\be_1^7(y)$ to $\be_1^7(x)$ for some $7/2< x < 5$ (where the lower bound is chosen so that $ \ind(\be_1^7(x)) =  \ind(\be_1^7(y))$.
 Such a gluing operation should be possible 
in the context used in cylindrical contact homology, but for this, as explained in the work of Hutchings--Nelson,  one must use  domain dependent $J$; more precisely one sets up the theory so that $J$ depends on the circle coordinate in the domain.  However, 
  although one probably can do the gluing for domain dependent $J$ and hence obtain a curve in $\ov X_x$ for $x> 7/2$,  because we can no longer rely on automatic regularity~\cite{W} 
there is no reason why  the resulting glued curves cannot be cancelled by some others   
 as one perturbs $J$ to make it domain independent.  Thus
 there is no reason why  there should be a limiting building for  domain independent $J$.\footnote
 {
 Note that we cannot assert the existence of such a building by means of  a cobordism argument in cylindrical contact homology because the top end is not cylindrical.  Indeed,  when one removes a line from $\ov X_y$ and completes the resulting manifold at its positive end, the curve $C$  becomes asymptotic to a curve $C'$  with three ends each of multiplicity $1$, rather than one end of multiplicity $3$.}
 This is just as well, since when 
$ \frac 72< x < 5$  there is no suitable limiting   genus zero $J$-holomorphic building.  This would have to have a top level of degree $3$ in $ \ov X_x$ (with $\frac12$-grading $9$),   bottom levels in the symplectization $\p E(1,x)\times \R$, and
 bottom end asymptotic to $\be_1^7(x)$ (with $\frac12$-grading $10$ for  $x\in (4,5)$ and $11$ for $x\in (\frac 72,4)$).   Thus the total ECH index would be  negative.    Since somewhere injective curves always have nonnegative ECH index,  at least one of its components must therefore be multiply covered.  But it is easy to check that  no buildings satisfy these requirements. 
 For example, if the top  component in $ \ov X_x$ were a triply covered copy of $C_1$ it would have end of multiplicity $6$, and there is no holomorphic curve with top end on $\be_1^6(x)$ and bottom end on $\be_1^7(x)$.
\hfill$\er$
\end{rmk}

\NI {\bf Acknowledgements.}  This paper is an offshoot of my joint project with Dan Cristofaro-Gardiner and Richard Hind to calculate $c_k(x)$ for all $x>\tau^4$.  I wish to thank them for being such wonderful mathematicians to work with,  Michael Hutchings for help with understanding obstruction bundle gluing,   the referee whose careful comments helped clarify the exposition, and finally Dan Cristofaro-Gardiner  for a very thorough reading of this revision.   Also I wish to recognize Dr. Edge who taught me when I was an undergraduate at Edinburgh University in 1962-67.  He gave an inspiring   course on projective geometry: I still remember him telling us that the secret of the $27$ lines on the cubic surface  lay in four dimensions -- very mysterious to me at the time!


\begin{thebibliography}{999999}

\bibitem[BEHWZ]{BEHWZ} F. Bourgeois, Ya. Eliashberg, H. Hofer, K. Wysocki and E. Zehnder,
Compactness results in symplectic field theory, {\it Geom. Topol.} {\bf 7} (2003) 799--888.

\bibitem[CGH]{CGH} D. Cristofaro-Gardiner  and R. Hind, Symplectic embeddings of products, {\it Comm. Math. Helv.}, to appear.  
arXiv:1702.03607

\bibitem[CGHM]{CGHM} D. Cristofaro-Gardiner, R. Hind and D. McDuff,   The ghost stairs stabilize to sharp symplectic embedding obstructions, 
arXiv:1702.03607.

\bibitem[EO]{EO} T. Ekholm and A. Oancea, Symplectic and contact differential graded algebras, arXiv:1506.01078.

\bibitem[Gu]{Gu} Jean Gutt, The positive equivariant symplectic homology as an invariant for some 
contact manifolds, arXiv:1503.01443.

\bibitem[Hi]{Hi}  R. Hind, Some optimal embeddings of symplectic ellipsoids, {\it J. Topol.} {\bf 8} (2015), no 3, 871--883.

\bibitem[HK]{HK} R. Hind and E. Kerman, New obstructions to symplectic embeddings, {\it Invent. Math.} {\bf 196} (2014), no.2, 383--452.

\bibitem[HK2]{HK2}   R. Hind and E. Kerman,  Erratum to New obstructions to symplectic embeddings, to appear.

\bibitem[HK3]{HK3}   R. Hind and E. Kerman, $J$-holomorphic cylinders between ellipsoids in dimension $4$, arXiv:1707.08635

\bibitem[Ho]{Ho} H. Hofer, Pseudoholomorphic curves in symplectizations with application to the Weinstein conjecture in dimension $3$, 
{\it Inventiones Math.} {bf 114} (1993), 515--563.

%
%
\bibitem[Hu]{Hlect} M. Hutchings, Lecture notes on embedded contact homology, in {\it Contact and symplectic topology}, 389--484, Bolyai Soc. Math. Stud. {\bf 26}, {\it J\'anos Bolyai Math. Soc.}, Budapest, 2014.
%
%
\bibitem[HN]{HN} M. Hutchings  and J. Nelson,  Cylindrical contact homology for dynamically convex contact forms in three dimensions,
arXiv:1407.2898, {\it J. Symp. Geom.}, to appear.

\bibitem[HT]{HT} M. Hutchings and C. Taubes, Gluing pseudoholomorphic curves along branched covered cylinders, I.  {\it J. Symp. Geom.} {\bf 5} (2007), no 1, 43--137.


\bibitem[M1]{Mell}  D. McDuff,  Symplectic embeddings of $4$-dimensional ellipsoids, {\it J. Topol.} {\bf 2} (2009), no.1. 1-22,  Corrigendum: {\it J. Topol.} {\bf 8} (2015) no 4, 1119-1122.



\bibitem[MS]{MS}   D. McDuff and F. Schlenk, The embedding capacity of four-dimensional symplectic ellipsoids, {\it Ann. of Math.} (2) {\bf  175} (2012), no 3, 1191--1282.



\bibitem[P]{P}  J. Pardon, Contact homology and virtual fundamental cycles, arXiv:1508.03873.

\bibitem[S]{Schl}   F. Schlenk, Symplectic embedding problems, old and new, to appear

\bibitem[W]{W}  C. Wendl, Automatic transversality and orbifolds of punctured holomorphic curves in dimension four, {\it Comment. Math. Helv.} {\bf 85} (2010), no 2, 347--407.
%

\end{thebibliography}
\end{document}